\numberwithin{equation}{section}
\newtheorem{lemma}[equation]{Lemma}
\newtheorem{prop}[equation]{Proposition}
\newtheorem{theorem}[equation]{Theorem}
\theoremstyle{definition}
\def\IR{\mathbb R}
\def\IZ{\mathbb Z}
\def\eps{\varepsilon}
\newcommand{\inj}{\operatorname{inj}}
\newcommand{\im}{\operatorname{im}}
\newcommand{\area}{\operatorname{area}}
\newcommand{\C}{\mathcal{C}}
\newcommand{\Sph}{\mathbb{S}}
\begin{document}

\title[Maximizing the first eigenvalue on non-orientable surfaces]{Existence of metrics maximizing the first eigenvalue on non-orientable surfaces}

\author{Henrik Matthiesen}
\address
{HM: Max Planck Institute for Mathematics,
Vivatsgasse 7, 53111 Bonn
\newline
current adress: Department of Mathematics, University of Chicago,
5734 S. University Ave, Chicago, Illinois 60637}
\email{hmatthiesen@math.uchicago.edu}
\author{Anna Siffert}
\address
{AS: Max Planck Institute for Mathematics,
Vivatsgasse 7, 53111 Bonn}
\email{siffert@mpim-bonn.mpg.de}
\date{\today}
\subjclass[2010]{35P15, 49Q05, 49Q10, 58E11, 58E20}
\keywords{Laplace operator, topological spectrum, harmonic map, minimal surface, shape optimization}

\begin{abstract} We prove the existence of metrics maximizing the first eigenvalue normalized by area on closed, non-orientable surfaces assuming two spectral gap conditions.
These spectral gap conditions are proved by the authors in \cite{MS3}.
\end{abstract}

\maketitle

\section{Introduction}
%%%%%%%%%%%%%%%%%%%%%%%%%%%%%%%%%%%%%%%%%%%%%%%%%%%%%%%%

For a closed Riemannian surface $(\Sigma,g)$ the spectrum of the Laplace operator
acting on smooth functions, is purely discrete and can be written as
\begin{equation*}
0=\lambda_0<\lambda_1(\Sigma,g) \leq \lambda_2(\Sigma,g) \leq \lambda_3(\Sigma,g) \leq \dots \to \infty,
\end{equation*}
where we repeat an eigenvalue as often as its multiplicity requires.

\smallskip

The pioneering work of Hersch \cite{hersch} and Yang--Yau \cite{yang_yau} raised the natural question, whether there are metrics $g$ that maximize the scale-invariant quantities
\begin{equation*}
\bar \lambda_1(\Sigma):=\lambda_1(\Sigma,g)\area(\Sigma,g)
\end{equation*}
if $\Sigma$ is a closed surface of fixed topological type (see also \cite{karpukhin,li_yau} for the case of non-orientable surfaces). 
Such maximizers have remarkable properties.
In fact, they always arise as immersed minimal surfaces (of possibly high codimension) in a sphere \cite{Ilias_ElSoufi} and are unique in their conformal class unless they are branched immersions into the two sphere \cite{ckm, montiel_ros, nayatani_shoda}.
By a slight abuse of notation, we also call
$\Sigma$, endowed with a maximizing metric, a \lq maximizer\rq.  

\smallskip

For the statement of our results and related work, we need to introduce some notation.
We write $\Sigma_\gamma$ for a closed orientable surface of genus $\gamma.$
Similarly, $\Sigma^K_\delta$ denotes a closed non-orientable surface of non-orientable genus $\delta.$ Here, $K$ stands for Klein.
We briefly elaborate on these notions in \cref{notation}. Furthermore, we use the common notation
\begin{align*}
\Lambda_1(\gamma)= \sup_{g} \lambda_1(\Sigma_\gamma,g)\area(\Sigma_\gamma,g),
\end{align*}
and similarly,
\begin{align*}
\Lambda_1^K(\delta)=\sup_{g} \lambda_1(\Sigma_\delta^K,g)\area (\Sigma_\delta^K,g),
\end{align*}
with the supremum taken over all smooth metrics on $\Sigma_\gamma,$ respectively $\Sigma_\delta^K.$

\begin{comment}
It is convenient to use the notation
\begin{equation*}
\Lambda_1(\Sigma)=\sup_{g} \lambda_1(\Sigma,g)\area(\Sigma,g),
\end{equation*}
where  $\Sigma$ is a closed surface and the supremum is taken over all smooth metrics $g$ on $\Sigma.$
If $\Sigma$ is orientable and has genus $\gamma,$ then $\Lambda_1(\Sigma)=\Lambda_1(\gamma)$.
If $\Sigma$ is non-orientable and has non-orientable genus $\delta,$
then $\Lambda_1(\Sigma)=\Lambda_1^K(\delta).$
\end{comment}

\smallskip

Explicit values for $\Lambda_1(\gamma)$ or $\Lambda_1^K(\delta)$ are only known in very few cases.
However, in all of these cases not only the values but also explicit maximizing metrics are known.

The case of the sphere is due to Hersch.
We have $\Lambda_1(\Sph^2)=8 \pi$ with unique maximizer the round metric \cite{hersch}.
His arguments are very elegant and a cornerstone in the development of the subject. For the real projective plane, we have
$\Lambda_1(\mathbb{RP}^2)=12 \pi$ with unique maximizer the round metric \cite{li_yau}.
The proof extends the ideas from \cite{hersch} in a conceptually very nice way.

The first result for higher genus surfaces is due to Nadirashvili, namely
$\Lambda_1(T^2)=8 \pi^2/\sqrt{3}$ with unique maximizer the flat equilateral torus \cite{nadirashvili}.
Nadirashvili's arguments are very different from the previously employed methods.
The crucial step in his proof is to obtain the existence of a maximizer.
Using \cite{montiel_ros} (see also \cite{ckm}) it follows that such a maximizer necessarily has to be flat.
The sharp bound follows then from earlier work of Berger \cite{berger}.

For the Klein bottle,  
$\Lambda_1(K)=12 \pi E (2 \sqrt{2}/3)$ with unique maximizer a metric of revolution \cite{ckm,esgj, nadirashvili}.
Here $E$ is the complete elliptic integral of the second kind.

There is also a conjecture concerning the sharp bound on genus $2$ surfaces \cite{jlnnp}, a proof of which has very recently been given by Nayatani and Shoda 
in \cite{nayatani_shoda}.

\smallskip

Let us also mention that there are a quite some results concerning similar questions for higher order eigenvalues, 
-- see e.g.\
\cite{nadirashvili_sire_2, petrides-2} and
 \cite{nadirashvili-2, nadirashvili_sire_3, knpp}
for the case of  $\Sph^2$
and \cite{nadirashvili_penskoi, karpukhin-3} for the case of $\IR P^2$.
\smallskip

The growing interest in finding
maximizers for eigenvalue functionals on surfaces starting from Nadirashvili's paper \cite{nadirashvili} 
is certainly connected to the connection of the problem to minimal surfaces in spheres.
Similarly, for the Steklov eigenvalue problem, there is a connection to free boundary minimal
surfaces in Euclidean balls.
In pioneering work 
Fraser and Schoen showed the existence of maximizers for the first Steklov eigenvalue on surfaces with boundary of genus $0$ \cite{fs}.

\smallskip

Recently, Petrides used many of the ideas in \cite{fs} to prove the following beautiful result concerning metrics realizing $\Lambda_1(\gamma).$

\begin{theorem}[Theorem 2 in \cite{petrides}] \label{max_eigen_petrides}
If $\Lambda_1(\gamma-1)<\Lambda_1(\gamma)$, there is a metric $g$ on $\Sigma=\Sigma_\gamma$, which is smooth
away from finitely many conical singularities, such that 
\begin{equation*}
\lambda_1(\Sigma,g)\area(\Sigma,g)=\Lambda_1(\gamma).
\end{equation*}
\end{theorem}

We extend this to non-orientable surfaces.
Since non-orientable surfaces can degenerate to non-orientable surfaces as well as orientable ones, we need to make two instead of only
a single spectral assumption.

\begin{theorem} \label{ex_non_orientable_main}
If $\Lambda_1^K(\delta-1)<\Lambda_1^K(\delta)$ and $\Lambda_1(\lfloor (\delta -1)/2 \rfloor) < \Lambda_1^K(\delta)$,
there is a metric $g$ on $\Sigma=\Sigma_\delta^K$, which is smooth away from at most finitely many conical singularities, such that
\begin{equation*}
\lambda_1(\Sigma,g)\area(\Sigma,g)=\Lambda_1^K(\delta).
\end{equation*}
\end{theorem}

Our methods are very similar to those in \cite{petrides}.
In addition to the cases already handled by Petrides, we also need to take care of degenerating one-sided geodesics.

\medskip

The non-strict inequality $\Lambda_1(\gamma-1) \leq \Lambda_1(\gamma)$ was proved by Colbois and El Soufi in \cite{ces} using a result of Ann{\'e} \cite{anne}.
It is easy to obtain the non-strict versions of the inequalities assumed in \cref{ex_non_orientable_main} along the same lines, 
see also \cite{MS2}.
In \cite{MS2} we prove the monotonicity $\Lambda_1(\gamma-1) < \Lambda_1(\gamma)$ under some extra assumptions on the maximizing metric using a relatively simple glueing construction.
In \cite{MS3},
by means of a much more complicated glueing construction and using \cref{max_eigen_petrides} and \cref{ex_non_orientable_main}, we prove all of the spectral gap conditions assumed in \cref{max_eigen_petrides} and \cref{ex_non_orientable_main} above 
In particular this implies the existence of maximizing metrics on closed surfaces of any topological type.

\begin{theorem}[Theorem 1.3 in \cite{MS3} using \cref{max_eigen_petrides} and \cref{ex_non_orientable_main}]
Let $\Sigma$ be a closed surface. Then there is a metric $g$ on $\Sigma$, which is smooth away from at most
finitely many conical singularities, such that
$$
\lambda_1(\Sigma,h) \area(\Sigma,h) \leq \lambda_1(\Sigma,g) \area(\Sigma,g)
$$
for any smooth metric $h$ on $\Sigma$.
\end{theorem}

\smallskip

\textbf{Outline.}
In Section\,\ref{comp} we prove that the set of orientable, hyperbolic surfaces with injectivity radius bounded below is a compact
subset of the moduli space. This is a version of the Mumford compactness criterion for non-orientable surfaces.
We use this result in the main section, namely Section\,\ref{maximize}, in which we prove
 Theorem\,\ref{ex_non_orientable_main}.
\smallskip

\textbf{Acknowledgements.}
The authors would like to thank the Max Planck Institute for Mathematics in Bonn for financial support and excellent working conditions.

%%%%%%%%%%%%%%%%%%%%%%%%%%%%
\section{Compactness for non-orientable surfaces} \label{comp}
%%%%%%%%%%%%%%%%%%%%%%%%%%%%
The Mumford compactness criterion \cite{mumford}  states that the set of orientable, hyperbolic surfaces with injectivity radius bounded below is a compact
subset of the moduli space. 
In this section we show that this also holds for non-orientable surfaces.
Probably, this is well-known, but for the sake of completeness and since we will use the arguments from our proof again, we include a proof below.

\smallskip

Given any Riemannian metric $g_0$ on $\Sigma=\Sigma_\delta^K,$  the Poincar\'e uniformization theorem asserts that we can find a new metric on $\Sigma$
which is conformal to $g_0$ and has constant curvature $+1,0,$ or $-1,$ depending
on the sign of $\chi(\Sigma).$
Assuming $\delta \geq 3,$ these metrics have curvature $-1.$
Let $h_k$ be a sequence of such metrics on $\Sigma$ with injectivity radius bounded
uniformly from below, $\inj(\Sigma,h_k) \geq c>0.$
The goal is to prove that there exist diffeomorphisms $\sigma_k$ of $\Sigma$ and a hyperbolic metric $h$ of $\Sigma$, such that $\sigma_k^* h_k$ converges smoothly to $h$ as $k\rightarrow\infty$.
Our strategy is to apply the Mumford compactness criterion to the orientation double covers
of the surfaces $(\Sigma,h_k).$

\smallskip

Consider the orientation double cover $\hat \Sigma=\Sigma_{\delta-1}$ of $\Sigma$  
endowed with the pullback metrics of $h_k,$ denoted by $\hat h_k.$
Since $\delta \geq 3,$ these are orientable hyperbolic surface of genus $\delta -1$ and may
thus be regarded (if we also fix a marking) as elements in Teichm{\"u}ller space $\mathcal{T}_{\delta -1},$
which in addition admit fixed point free, isometric, orientation reversing involutions $\iota_k.$

\smallskip

We have the following lemma.

\begin{lemma} \label{limit_inv}
Assume that $\inf_k \inj(\Sigma_\delta^K, h_k)>0.$
Then there exist a sequence of diffeomorphisms $\tau_k \colon \Sigma_{\delta -1} \to \Sigma_{\delta -1},$
such that, up to taking a subsequence, $\tau_k^* \hat h_k \to \hat h$ in $C^{\infty}.$
Moreover, $(\Sigma_{\delta -1},\hat h)$ admits a fixed point free, isometric, orientation reversing involution $\iota$, which is obtained as a $C^1$-limit of the involutions
$\tau_k^{-1} \circ \iota_k \circ \tau_k.$
\end{lemma}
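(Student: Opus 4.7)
The strategy is to lift the problem to the orientation double cover $p\colon\hat\Sigma\to\Sigma$, apply the classical Mumford compactness criterion there, and transport the involutions to the limit. Since $p$ is a local isometry, any geodesic loop at $\hat x\in\hat\Sigma$ projects to a geodesic loop at $p(\hat x)\in\Sigma$ of the same length; as the injectivity radius of a closed hyperbolic surface equals half the length of the shortest geodesic loop at the point, this yields $\inj(\hat\Sigma,\hat h_k)\ge\inj(\Sigma,h_k)\ge c>0$. Mumford's theorem now supplies diffeomorphisms $\tau_k$ of $\Sigma_{\delta-1}$ and a hyperbolic metric $\hat h$ with $\tau_k^*\hat h_k\to\hat h$ in $C^\infty$ along a subsequence.

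Set $\tilde\iota_k:=\tau_k^{-1}\circ\iota_k\circ\tau_k$. Each $\tilde\iota_k$ is an orientation-reversing, fixed-point-free isometric involution of $(\Sigma_{\delta-1},\tau_k^*\hat h_k)$. Because $\tau_k^*\hat h_k\to\hat h$ smoothly, the $\tilde\iota_k$ are eventually uniformly Lipschitz with respect to $\hat h$, and $\Sigma_{\delta-1}$ is compact, so Arzel\`a--Ascoli produces a further subsequence converging in $C^0$ to some map $\iota\colon\Sigma_{\delta-1}\to\Sigma_{\delta-1}$.

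To verify the desired properties of $\iota$, I would pass $d_{\tau_k^*\hat h_k}(\tilde\iota_k(x),\tilde\iota_k(y))=d_{\tau_k^*\hat h_k}(x,y)$ and $\tilde\iota_k^2=\id$ to the limit, obtaining that $\iota$ is $\hat h$-distance-preserving (hence a smooth isometry by Myers--Steenrod) and that $\iota^2=\id$. For the fixed-point-free condition, the key input is a uniform displacement bound: if $d_{\hat h_k}(\hat x,\iota_k(\hat x))<2c$ then a length-realizing geodesic from $\hat x$ to $\iota_k(\hat x)$ would project to a geodesic loop at $p(\hat x)$ of length less than $2c$, contradicting $\inj(\Sigma,h_k)\ge c$. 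Transferring via $\tau_k$ and taking the limit yields $d_{\hat h}(x,\iota(x))\ge 2c>0$. Finally, the standard regularity theory for Riemannian isometries upgrades the $C^0$-convergence $\tilde\iota_k\to\iota$ to $C^\infty$, so the relations $\tilde\iota_k^*\omega_{\tau_k^*\hat h_k}=-\omega_{\tau_k^*\hat h_k}$ pass to $\iota^*\omega_{\hat h}=-\omega_{\hat h}$, showing that $\iota$ is orientation-reversing.

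The main obstacle will be the fixed-point-free property, which cannot be read off from compactness alone and really requires the injectivity radius hypothesis on $\Sigma$ (rather than just on $\hat\Sigma$) to produce the uniform displacement bound. The other conclusions are essentially consequences of standard facts about convergence of isometries under smooth convergence of the background metric.
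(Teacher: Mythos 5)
Your argument is correct and follows the paper's overall route: pass to the orientation double cover, apply Mumford compactness there, extract a $C^0$-limit of the conjugated involutions $\tilde\iota_k=\tau_k^{-1}\circ\iota_k\circ\tau_k$ via Arzel\`a--Ascoli, and upgrade the limit to a smooth isometric involution by Myers--Steenrod. The one genuinely different step is the uniform displacement bound needed for fixed-point-freeness. You obtain it by projecting a putative short geodesic from $\hat x$ to $\iota_k(\hat x)$ under the local isometry $p$ to a nonconstant geodesic loop at $p(\hat x)$ of length $<2c$, contradicting $\inj(\Sigma,h_k)\ge c$. The paper instead chooses $c'>0$ so small that $\hat h$-balls of radius $2c'$ are strictly geodesically convex, hence so are $\tau_k^*\hat h_k$-balls of radius $c'$ for large $k$, and argues that if $\tilde\iota_k$ displaced some $p$ by less than $c'$, it would preserve the image of the unique minimizing geodesic from $p$ to $\tilde\iota_k(p)$ while swapping its endpoints, forcing a fixed point on the geodesic by the intermediate value theorem and contradicting that $\iota_k$ is fixed-point-free. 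Both arguments are valid; yours is more concrete and gives the explicit constant $2c$, while the paper's uses only the $C^\infty$-convergence of $\tau_k^*\hat h_k$ together with the fixed-point-freeness of $\iota_k$, not the original injectivity radius bound on $\Sigma$. For that reason your closing remark, that fixed-point-freeness ``really requires'' the injectivity radius hypothesis on $\Sigma$ rather than just on $\hat\Sigma$, slightly overstates the case: the paper's geodesic-convexity argument shows it already follows from the weaker input once Mumford compactness has been invoked. A final minor difference: for orientation-reversal you upgrade to $C^\infty$ convergence and pull back volume forms, whereas the paper simply appeals to $C^0$-convergence (i.e.\ continuity of the topological degree); both are fine.
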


\begin{proof}
As above, we simply write $\Sigma$ instead of $\Sigma_{\delta}^K,$ and $\hat \Sigma$ instead of $\Sigma_{\delta-1}.$
It is elementary to see that $\inj(\hat \Sigma,\hat h_k) \geq \inj(\Sigma ,h_k).$
Therefore, we can apply the Mumford compactness criterion \cite{mumford} and find
diffeomorphisms $\tau_k$ and a limit metric $\hat h$ as asserted.

\smallskip

It remains to show that we can find the involution $\iota.$
Since $\tau_k^* \hat h_k \to \hat h$ in $C^{\infty},$ we have the uniform Lipschitz bound
\begin{equation*}
\begin{split}
& d_{\hat h} ((\tau_k^{-1} \circ  \iota_k \circ \tau_k)(p),(\tau_k^{-1} \circ \iota_k \circ \tau_k)(q)) 
\\
& \leq C d_{\tau_k^* \hat h_k} ((\tau_k^{-1} \circ \iota_k \circ \tau_k)(p),(\tau_k^{-1} \circ \iota_k \circ \tau_k)(q))
\\ 
&=C d_{\tau_k^* \hat h_k} (p,q)
\\
&\leq C d_{\hat h}(p,q).
\end{split}
\end{equation*}
Similarly, we obtain a uniform bound on $\| D \iota\|_{C^{0,1}(\hat \Sigma, \hat h)}$.
Since $\hat \Sigma$ is compact, it follows from Arz\'ela--Ascoli, that, up to taking
a subsequence, $\tau_k^{-1} \circ \iota_k \circ \tau_k \to \iota$ in $C^{1}(\hat \Sigma , \hat h).$
We have
\begin{equation} \label{dist_pres}
\begin{split}
d_{ \hat h}(\iota(p),\iota(q)) 
 & \leq  \lim_{k \to \infty} d_{\tau_k^*\hat h_k}(\iota(p),(\tau_k^{-1} \circ \iota_k \circ \tau_k)(p))  
\\
& + \lim_{k \to \infty} d_{\tau_k^*\hat h_k}((\tau_k^{-1} \circ \iota_k \circ \tau_k)(p), (\tau_k^{-1} \circ \iota_k \circ \tau_k)(q))
\\ 
& + \lim_{k \to \infty} d_{\tau_k^*\hat h_k}((\tau_k^{-1} \circ \iota_k \circ \tau_k)(q),\iota(q))
\\
& \leq C \lim_{k \to \infty} d_{C^0(\hat \Sigma,\hat h)} (\tau_k^{-1} \circ \iota_k \circ \tau_k,\iota)
\\
& + \lim_{k \to \infty} d_{\tau_k^*\hat h_k}((\tau_k^{-1} \circ \iota_k \circ \tau_k)(p), (\tau_k^{-1} \circ \iota_k \circ \tau_k)(q))
\\
& = d_{\hat h}(p,q),
\end{split}
\end{equation}
using that $\tau_k^*\hat h_k \to \hat h$ in $C^\infty,$ and $\tau_k^{-1} \circ \iota_k \circ \tau_k \to \iota$ in $C^0(\hat \Sigma,\hat h).$
Observe that $\iota$ is an involution again, hence \eqref{dist_pres} implies that actually
\begin{equation*}
d_{\hat h}(\iota(p),\iota(q))  = d_{\hat h}(p,q).
\end{equation*}
By the Myers--Steenrod theorem it thus follows that $\iota$ is a smooth, isometric involution.

\smallskip

We need to show that $\iota$ does not have any fixed points.
But this is a consequence of the general bound $d_{\tau_k^*\hat h_k}((\tau_k^{-1} \circ \iota_k \circ \tau_k)(p),p)\geq c >0$ for some
uniform $c.$
To prove this let $c>0$ be such that $B_{\hat h}(x,2c)\subset \hat \Sigma$ is strictly geodesically convex
for any $x \in \hat \Sigma.$
Then $B_{\tau_k^* \hat h_k}(x,c)$ is strictly geodesically convex for $k \geq K$ sufficiently large.
Assume now that there is  $k \geq K,$  such that $d_{\hat h_k}((\tau_k^{-1} \circ \iota_k \circ \tau_k)(p),p) <c.$
Let $\gamma$ be the unique minimizing geodesic connecting $p$ to $(\tau_k^{-1} \circ \iota_k \circ \tau_k)(p).$
Since $\tau_k^{-1} \circ \iota_k \circ \tau_k$ is an isometry, we need to have
$\im (\tau_k^{-1} \circ \iota_k \circ \tau_k \circ \gamma) = \im \gamma.$
Since $\iota_k$ is fixed point free, $\gamma$ is non-constant.
Therefore, $\tau_k^{-1} \circ \iota_k \circ \tau_k$   restricted to $\im \gamma$ induces
an involution of the interval $[0,1]$ mapping $0$ to $1$ and vice versa.
But such an involution needs to have a fixed point.
It follows that $\iota_k$ has a fixed point for large $k$, which is a contradiction.

Finally, note that $\iota$ is orientation reversing by $C^{0}$-convergence.
\end{proof}

Since $\tau_k^{-1} \circ \iota_k \circ \tau_k \to \iota$ in $C^1$ it follows that the metric $\hat h$ on $\hat \Sigma$ is $\iota$-invariant.
Therefore, it induces a smooth hyperbolic metric $h$ on $\Sigma.$
Moreover, the hyperbolic metrics on $\Sigma$ induced from $\tau_k^* h_k$ and $\tau_k^{-1} \circ \iota_k \circ \tau_k$ converge 
smoothly to $h$ on $\Sigma.$
Finally, observe that the diffeomorphisms $\tau_k$ induce diffeomorphisms $\sigma_k$ of $\Sigma,$
such that $\sigma_k^* h_k$ are the metrics described above and converge smoothly to $h.$

Thus we have proved the following proposition.

\begin{prop} \label{comp_non_or}
Let $(h_k)$ be a sequence of hyperbolic metrics on $\Sigma_\delta^K$ such that their injectivity radius is uniformily bounded from below
$\inj(\Sigma_\delta^K,h_k) \geq c>0.$
Then there are diffeomorphisms $\sigma_k$ of $\Sigma_\delta^K$ and a hyperbolic metric $h,$
such that $\sigma_k^* h_k \to h$ smoothly.
\end{prop}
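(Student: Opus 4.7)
The plan is to leverage \cref{limit_inv} on the orientation double cover and then push the data back down to $\Sigma = \Sigma_\delta^K$ using the limit involution it produces.

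First I apply \cref{limit_inv} to the pullback metrics $\hat h_k$ on $\hat\Sigma = \Sigma_{\delta-1}$, which inherit the lower injectivity radius bound $\inj(\hat\Sigma,\hat h_k) \geq \inj(\Sigma,h_k) \geq c$. This yields diffeomorphisms $\tau_k$ of $\hat\Sigma$, a hyperbolic limit metric $\hat h$, and a free, orientation-reversing, isometric involution $\iota$ of $(\hat\Sigma,\hat h)$ arising as a $C^0$-limit of $\tau_k^{-1}\iota_k\tau_k$. A direct pullback computation shows $\tau_k^*\hat h_k$ is invariant under $\tau_k^{-1}\iota_k\tau_k$; combining this with $\tau_k^*\hat h_k \to \hat h$ in $C^\infty$ and the $C^0$-convergence of the conjugated involutions forces $\iota^*\hat h = \hat h$. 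Since $\iota$ is a free orientation-reversing involution on $\Sigma_{\delta-1}$, the quotient $\hat\Sigma/\iota$ is a closed non-orientable surface of non-orientable genus $\delta$; fixing any diffeomorphism $\phi\colon \Sigma \to \hat\Sigma/\iota$, the pushdown of $\hat h$ furnishes a smooth hyperbolic metric $h$ on $\Sigma$.

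The main obstacle is producing the diffeomorphisms $\sigma_k$ of $\Sigma$ itself. The subtlety is that $\tau_k$ conjugates $\iota_k$ not to $\iota$, but only to the approximation $\tau_k^{-1}\iota_k\tau_k$. To address this I would first upgrade the $C^0$-convergence $\tau_k^{-1}\iota_k\tau_k \to \iota$ to $C^\infty$-convergence, which is automatic since all these maps are isometries of hyperbolic metrics that converge smoothly (Myers--Steenrod combined with the standard fact that isometries of smoothly convergent metrics converge smoothly via their jets at a single point). Then I would correct $\tau_k$ by a diffeomorphism $\psi_k$ of $\hat\Sigma$, with $\psi_k \to \id$ in $C^\infty$, chosen so that $\psi_k^{-1}\iota\psi_k = \tau_k^{-1}\iota_k\tau_k$ exactly; such $\psi_k$ exist by local transitivity of $\Diff(\hat\Sigma)$ acting by conjugation on the space of free orientation-reversing smooth involutions (constructed concretely via a tubular neighborhood argument near a fundamental domain for $\iota$). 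Setting $\tilde\tau_k := \tau_k\circ\psi_k$, the map $\tilde\tau_k$ now exactly intertwines $\iota$ with $\iota_k$, hence descends to a diffeomorphism $\bar\tau_k\colon \hat\Sigma/\iota \to \Sigma$, and one defines $\sigma_k := \bar\tau_k\circ\phi$.

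It remains to verify the convergence. By construction, $\sigma_k^* h_k$ is the pushdown via $\phi$ of $\tilde\tau_k^*\hat h_k = \psi_k^*(\tau_k^*\hat h_k)$. Since $\tau_k^*\hat h_k \to \hat h$ and $\psi_k \to \id$ both in $C^\infty$ on $\hat\Sigma$, we get $\tilde\tau_k^*\hat h_k \to \hat h$ in $C^\infty$; as the quotient map $\hat\Sigma \to \hat\Sigma/\iota$ is a local diffeomorphism, smooth convergence descends, and hence $\sigma_k^* h_k \to h$ in $C^\infty$ on $\Sigma$, as required.
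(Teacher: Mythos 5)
Your proof is correct and follows the same strategy as the paper: apply \cref{limit_inv} to the orientation double cover and then descend via the limit involution $\iota$, identifying $\hat\Sigma/\iota$ with $\Sigma$ by a fixed diffeomorphism. You are in fact more careful than the paper at the step where the $\tau_k$ descend to $\Sigma$ (the paper asserts this without comment, though $\tau_k$ need not be $\iota_0$-equivariant), so your introduction of the correcting diffeomorphisms $\psi_k\to\id$ is a genuine improvement; only note that the conjugation condition should read $\psi_k\circ\iota\circ\psi_k^{-1}=\tau_k^{-1}\circ\iota_k\circ\tau_k$ (equivalently, set $\tilde\tau_k:=\tau_k\circ\psi_k^{-1}$) so that $\tilde\tau_k^{-1}\circ\iota_k\circ\tilde\tau_k=\iota$ and $\tilde\tau_k$ descends as claimed.
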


\section{Maximizing the first eigenvalue} \label{maximize}
In this section we extend \cite[Theorem 2]{petrides} to the non-orientable case.
The strategy is the same as in \cite{petrides}.
That is, we first use that we can maximize the first eigenvalue in each conformal class.
We then pick a maximizing sequence, consisting of maximizers in their own conformal class.
This has the advantage, that these metrics can be studied in terms of sphere-valued harmonic maps.
Using these harmonic maps it is possible to estimate the first eigenvalue along the maximizing sequence in case that the conformal class
degenerates.
To do so, we extend the results from \cite{zhu} to non-orientable surfaces.

\smallskip

For fixed non-orientable genus $\delta \geq 3,$ let $c_k$ be a sequence of conformal classes
on $\Sigma=\Sigma_\delta^K$
 represented by hyperbolic metrics $h_k,$ such that
\begin{equation*}
\lim_{k \to \infty} \sup_{g \in c_k} \lambda_1(\Sigma ,g) \area (\Sigma,g) = \Lambda_1^{K}(\delta).
\end{equation*}
We will now use the following result due to Nadirashvili--Sire (with an extra assumption not relevant for our purposes) and, independently, Petrides.

\begin{theorem}[{\cite[Theorem 2.1]{nadirashvili_sire} or \cite[Theorem 1]{petrides}}]
For each conformal class $c_k$ as above, there is a metric $g_k,$ which is smooth away from finitely many conical singularities, such that
\begin{equation*}
\lambda_1(\Sigma,g_k)\area(\Sigma,g_k)= \sup_{g \in c_k} \lambda_1(\Sigma,g)\area(\Sigma,g).
\end{equation*}
\end{theorem}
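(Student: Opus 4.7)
The plan is a direct variational argument within the fixed conformal class $c_k$. Fix the hyperbolic representative $h = h_k$ and parametrize metrics in $c_k$ as $g_u = e^{2u} h$; normalizing so that $\int_\Sigma e^{2u}\, dv_h = 1$, one wants to maximize $u \mapsto \lambda_1(g_u)$, which has the variational characterization $\lambda_1(g_u) = \inf\{\int |\nabla_h \vf|^2\, dv_h / \int \vf^2 e^{2u}\, dv_h : \int \vf\, e^{2u}\, dv_h = 0\}$. Select a maximizing sequence $u_n$ with $\lambda_n := \lambda_1(g_{u_n}) \to L := \sup_{g \in c_k} \lambda_1(g)\area(g)$.

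Next I would extract first eigenfunctions and pass to a weak limit. Since the multiplicity of $\lambda_1$ is bounded in terms of the topology only (Cheng-type estimate), we may, along a subsequence, choose $L^2(g_{u_n})$-orthonormal bases $\vf_1^n, \ldots, \vf_d^n$ of first eigenspaces for a fixed $d$ and assemble them into $\Phi_n \colon \Sigma \to \IR^d$. The eigenvalue equation $-\Delta_h \vf_i^n = \lambda_n e^{2u_n}\vf_i^n$ together with the normalization gives the uniform Dirichlet bound $\int |\nabla_h \Phi_n|^2\, dv_h = \lambda_n d$, so after further extraction $\Phi_n \rightharpoonup \Phi_\infty$ weakly in $W^{1,2}(h)$ and the probability measures $\mu_n := e^{2u_n}\, dv_h$ converge weak-$\ast$ to a probability measure $\mu$ on $\Sigma$.

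The main obstacle is controlling the atomic part of $\mu$. Decompose $\mu = e^{2u_\infty}\, dv_h + \sum_j m_j \delta_{p_j}$; each atom $p_j$ encodes concentration of the eigenmap $\Phi_n$ at $p_j$. A blow-up at the concentration scale near $p_j$ yields in the limit a non-constant harmonic map $\omega_j \colon \Sph^2 \to \Sph^{d-1}$, whose contribution to the eigenvalue-area product is bounded by the round sphere value $8\pi$ (an application of Hersch's argument on the bubble). Combining the atomic and smooth accounts, one obtains
\begin{equation*}
L \leq \lambda_1(e^{2u_\infty}h)\int_\Sigma e^{2u_\infty}\, dv_h + 8\pi \cdot \#\{p_j\}.
\end{equation*}
For $\Sigma_\delta^K$ with $\delta \geq 3$, a direct competitor construction (e.g.\ attaching a long thin neck, or inserting a small cap) yields $\sup_{g \in c_k} \lambda_1(g)\area(g) > 8\pi$, and a careful counting argument then forces $\#\{p_j\} = 0$, so $\lambda_1(e^{2u_\infty}h)\int_\Sigma e^{2u_\infty}\, dv_h = L$.

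For regularity, the limiting eigenmap satisfies, after normalization, the Takahashi-type relation $e^{2u_\infty} = |d\Phi_\infty|^2/(2L)$, so $\Phi_\infty$ is a harmonic map into a round sphere. Elliptic regularity for the harmonic map equation makes $\Phi_\infty$, and hence $u_\infty$, smooth on the open set $\{|d\Phi_\infty|>0\}$; its complement is discrete by unique continuation for harmonic maps, and a local expansion at each such zero shows $u_\infty$ has a logarithmic divergence corresponding to a conical singularity of the extremizer $g_{u_\infty}$. The technical heart of the argument is the bubble analysis leading to the displayed inequality; once that gap is established, the rest is standard.
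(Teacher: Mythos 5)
Note first that the paper does not prove this theorem; it is imported verbatim from Nadirashvili--Sire and Petrides, so there is no internal proof to compare against. Your sketch captures the right circle of ideas for the conformal maximization problem (maximizing sequence of conformal factors, weak-$\ast$ limit of the volume measures, an $8\pi$ threshold, Takahashi-type regularity), but two of its central steps would fail as written.

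The displayed inequality
$L \leq \lambda_1(e^{2u_\infty}h)\int_\Sigma e^{2u_\infty}\,dv_h + 8\pi\,\#\{p_j\}$
is not the correct dichotomy and is too weak to conclude. If the limit measure were entirely atomic the first term would vanish, leaving $L \leq 8\pi\,\#\{p_j\}$, which is no contradiction to $L > 8\pi$ as soon as $\#\{p_j\}\geq 2$. What is actually true (see \cite[Lemma 2.1 and 3.1]{kokarev}, used in exactly this way elsewhere in the paper, and the corresponding step in \cite{petrides}) is a sharp alternative with \emph{no} counting factor: if the probability measures $e^{2u_n}\,dv_h$ concentrate at even a single point, then already $\limsup_n \lambda_1(g_{u_n})\area(g_{u_n}) \leq 8\pi$. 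Atoms do not contribute additively; one concentration point caps the whole functional. More fundamentally, the bubbling step you invoke presupposes that $\Phi_n$ is a \emph{sphere-valued harmonic map}, so that Sacks--Uhlenbeck blow-up produces a nonconstant harmonic $\omega_j\colon\Sph^2\to\Sph^{d-1}$. But a first eigenmap for $g_{u_n}$ only satisfies $-\Delta_h\Phi_n = \lambda_n e^{2u_n}\Phi_n$; this coincides with the harmonic map equation precisely when $|\Phi_n|\equiv 1$, which by the El Soufi--Ilias extremal characterization holds \emph{at a maximizer} of the conformal functional, not along a generic maximizing sequence $u_n$. Your direct variational approach therefore has no access to the structure it needs at exactly the point where it needs it. This is why Petrides does not work with a raw maximizing sequence: he first maximizes a regularized functional (with better coercivity in the conformal factor) whose maximizers exist and \emph{do} carry the sphere-valued harmonic map structure, and only then passes to the limit as the regularization is removed; Nadirashvili and Sire obtain the extremal structure by a different route. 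Some such mechanism supplying the harmonic map structure along the sequence is indispensable and is absent from your outline.
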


From now on we assume that $g_k \in c_k$ is picked as in the preceding theorem.
Moreover, we assume that they are normalized to have
\begin{equation*}
\area(\Sigma,g_k)=1.
\end{equation*}
Since these metrics are maximizers, there is a family of first eigenfunctions $u_1^k,\dots, u_{\ell(k)+1}^k,$
such that $\Phi_k=(u_1^k , \dots, u_{\ell(k)+1}^k) \colon (\Sigma,h_k) \to \Sph^{\ell(k)}$
is a harmonic map \cite{elsoufi}.
Since the multiplicity of $\lambda_1$ is uniformly bounded in terms of the topology of $\Sigma$ \cite{besson, cheng},
we may pass to a subsequence, such that $\ell(k)$ is some constant number $l$.
Moreover, in this situation the maximizing metrics can be recovered by
\begin{equation*}
g_k= \frac{|\nabla \Phi_k |^2_{h_k}}{\lambda_1(\Sigma,g_k)}h_k.
\end{equation*}
In view of Proposition\,\ref{comp_non_or}, we want to show the following proposition.

\begin{prop} \label{degen_prop}
The injectivity radius of
$h_k$ is uniformly bounded from below, provided  that
$\Lambda_1^{K}(\delta)>\Lambda_1(\delta-1),$
and $\Lambda_1^{K}(\delta)>\Lambda_1^{K}(\delta-1).$
\end{prop}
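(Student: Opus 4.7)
The plan is to argue by contradiction, extending the scheme developed by Petrides \cite{petrides} in the orientable setting to the non-orientable case. Suppose that, after passing to a subsequence, $\inj(\Sigma,h_k)\to 0$. By the collar lemma and the Margulis thick--thin decomposition, the hyperbolic surfaces $(\Sigma,h_k)$ then contain finitely many disjoint simple closed geodesics $\gamma_k^1,\dots,\gamma_k^N$ of length tending to zero, surrounded by long thin collars. On the complement the injectivity radius stays bounded below; after applying suitable diffeomorphisms component by component, this complement converges smoothly to a disjoint union of complete hyperbolic surfaces with cusps, using the Mumford criterion on orientable components and \cref{comp_non_or} on non-orientable ones. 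The decisive new feature is that each $\gamma_k^i$ can be two-sided separating, two-sided non-separating, or one-sided. A topological case analysis shows that every limit component, after filling in its cusps, has either orientable genus at most $\lfloor(\delta-1)/2\rfloor$ or non-orientable genus at most $\delta-1$.

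I would then analyze the harmonic maps $\Phi_k\colon(\Sigma,h_k)\to\Sph^l$. Since $E(\Phi_k)=\tfrac{1}{2}\lambda_1(\Sigma,g_k)\area(\Sigma,g_k)\to\tfrac{1}{2}\Lambda_1^K(\delta)$ is uniformly bounded, $\eps$-regularity together with a standard bubble-tree extraction yields, along a subsequence, a limiting harmonic map $\Phi_\infty$ on the limit nodal surface, together with finitely many bubbles, namely non-constant harmonic spheres into $\Sph^l$. The crucial analytic ingredient is the no-neck energy identity: no energy is lost in the collapsing collars. This is the non-orientable analog of the main result of \cite{zhu}, used by Petrides in the orientable case. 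For two-sided collars, which are ordinary hyperbolic cylinders, the argument of \cite{zhu} applies verbatim. For one-sided collars I would lift to the orientation double cover of a collar neighborhood, where the M\"obius-type collar becomes a standard cylinder and $\Phi_k$ becomes equivariant under the deck involution; the orientable no-neck estimate applied to the lift then descends to the quotient after accounting for a factor of two.

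With the no-neck identity in hand, I would conclude in the spirit of \cite{petrides}. On each non-trivial limit component $\Sigma'$ the pull-back metric $|\nabla\Phi_\infty|^2_{h_\infty}h_\infty/\lambda_\infty$ is an admissible possibly-conical metric on the closed surface obtained by filling in the cusps, whose first eigenvalue times area is bounded above by $\Lambda_1(\Sigma')$, while each non-trivial bubble contributes at most $\Lambda_1(\Sph^2)=8\pi$. Summing these contributions and using the no-neck identity to equate the total with $\Lambda_1^K(\delta)$, I would arrive at an inequality of the form
\begin{equation*}
\Lambda_1^K(\delta)\;\leq\;\max\bigl\{\Lambda_1(\lfloor(\delta-1)/2\rfloor),\;\Lambda_1^K(\delta-1),\;8\pi\bigr\}.
\end{equation*}
The weak monotonicity of \cite{colbois_elsoufi} gives $\Lambda_1(\lfloor(\delta-1)/2\rfloor)\leq\Lambda_1(\delta-1)$, and standard trial-metric constructions give $\Lambda_1^K(\delta)>8\pi$ for every $\delta\geq 1$, so the hypotheses of the proposition force each term on the right to be strictly smaller than $\Lambda_1^K(\delta)$, a contradiction.

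The main obstacle is the no-neck energy identity on one-sided collars, where the orientable arguments do not apply directly and the double-cover construction must be carried out carefully so that the equivariant bubble tree descends to a consistent picture downstairs. A subsidiary issue is the topological bookkeeping of the limit components for each type of degenerating geodesic, which is more delicate than in the orientable setting because of the additional one-sided case and the possibility that cutting a two-sided non-separating geodesic produces an orientable component.
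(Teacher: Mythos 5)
Your overall scheme — argue by contradiction, decompose into thick and thin parts, analyze the harmonic maps $\Phi_k$ via the orientation double cover for one-sided geodesics, invoke a Zhu-type no-neck result, and bookkeep the topology of the limit pieces — is indeed the scheme the paper uses. The key difference, and the place where a genuine gap sits, is in the passage from the bubble-tree/energy-identity picture to the eigenvalue estimate. You write ``Summing these contributions and using the no-neck identity to equate the total with $\Lambda_1^K(\delta)$, I would arrive at an inequality of the form $\Lambda_1^K(\delta)\leq\max\{\dots\}$,'' but the first eigenvalue of a near-disjoint union does not bear any useful additive relation to the first eigenvalues of its pieces: if the area of $g_k$ were to split between two limit pieces joined across a degenerating collar, a two-valued near-locally-constant trial function shows $\lambda_1(g_k)\to 0$, which is irrelevant to bounding $\Lambda_1^K(\delta)$. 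The needed fact is a \emph{volume concentration dichotomy} — that the $g_k$-area concentrates either in a single collar $P_k^i$ or in a single thick component $\Sigma_k^j$ — which the paper proves separately (\cref{vol_cases}, following Petrides' Claim 11) via explicit conformally-invariant cutoff test functions, exactly because $\lambda_1(g_k)$ is bounded below. Only after this dichotomy is it legitimate to bound $\Lambda_1^K(\delta)$ by the $\Lambda_1$ of the single piece carrying the area; the ``max'' inequality does not come from the energy identity alone. You assert the conclusion without establishing this step, and that is the core omission.

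A secondary difference: you plan to subsume the case of area concentrating in a degenerating collar into the bubble-tree, invoking Kokarev-type point concentration to get $8\pi$. The paper instead handles this case by a direct Girouard/Li--Yau trial-function argument (\cref{geod_conc}), obtaining $8\pi$ when the pinching geodesic is two-sided and $12\pi$ when it is one-sided (mapping the M\"obius collar conformally into $\mathbb{RP}^2(\sqrt{3})$ and composing with the Veronese embedding). Zhu's theorem, as the paper uses it, gives $C^\infty_{loc}$-convergence on the \emph{thick} components $\hat\Sigma_k^1(9a_k)$ away from finitely many bubble points; upgrading this to a full bubble-tree decomposition over the degenerating collars themselves, including the no-neck statement on the neck, is additional analysis that the paper deliberately sidesteps by splitting into cases first. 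Your bound $8\pi$ in place of $12\pi$ is harmless for the final contradiction (both are dominated by $\Lambda_1(\delta-1)\geq\Lambda_1(1)=8\pi^2/\sqrt{3}$), but the route by which you claim to obtain it is not justified without the extra collar analysis. In short: fix the missing dichotomy, and either carry out the full collar bubble-tree carefully or, as the paper does, treat collar concentration separately by the Girouard/Li--Yau argument.
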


We will argue by contradiction and assume $\inj{(\Sigma, h_k)} \to 0.$
The Margulis lemma implies that we can find closed geodesics $\gamma_k^1,\dots,\gamma_k^s$ in $(\Sigma ,h_k),$
such that their lengths go to zero, i.e. $l_{h_k}(\gamma_i^k) \to 0,$ as $k \to \infty$.
We assume that $s$ is chosen maximal with this property.

Each of these geodesics is either one-sided or two-sided.
If a such a geodesic is two-sided, tubular neighborhoods are described by the classical
collar lemma for hyperbolic surfaces \cite{buser}.
In the second case we may apply the collar lemma to the orientation double cover as follows.

Let $c$ be a one-sided closed geodesic in $\Sigma.$
We write $\hat \Sigma$ for the orientation double cover and $\tau$ for the non-trivial deck transformation.
The lifts of $c$ to $\hat \Sigma$ can not be closed, since in this case they would be disjoint and it would follow
that $c$ is two-sided.
Thus the lifts $c_1$ and $c_2$ are geodesic segments with $\tau \circ c_1=c_2.$
Let $\mathcal{C}$ be a collar around the closed geodesic $c_2 * c_1.$
It is not very difficult to see that the action of $\tau$ near $c_2*c_1$ is just given by rotation about $\pi$ and reflection at $c_2 * c_1.$
Therefore, $\tau$ maps $\mathcal{C}$ to itself (by the explicit construction of $\mathcal{C}$), so that we can use $\mathcal{C}/\tau$ as a tubular neighborhood of $c$.

\smallskip

Our first goal is to prove that for the situation at hand the volume, measured with respect to $g_k$, either concentrates in the neighborhood of a pinching geodesic, or in one connected component of the complement of these neighborhoods.
Before stating and proving this result we need to introduce some notation, which we borrow from 
 Section\,4 in \cite{petrides}.

\smallskip

We write $s_1$ for the number of one-sided closed geodesics with length going to $0$.
Moreover, we denote by $s_2$ the number of such geodesics that are two-sided.
Clearly, $s=s_1+s_2$ and $0\leq s_1,s_2\leq s$.
From now on we assume that the closed geodesics $\gamma_{k}^{i}$ are ordered such that the first $s_1$ geodesics are one-sided.
Moreover, we write $l_k^i=l_{h_k}(\gamma_k^i)$ for the hyperbolic length of the short geodesics.

\smallskip

For all $s_1+1\le i\le s$ the collar theorem \cite{buser} asserts the existence of an open neighborhood $P_k^i$ of $\gamma_k^i$ isometric to the following truncated hyperbolic cylinder
\begin{equation*}
{\mathcal C}_k^i = \left\{\left(t,\theta\right)\lvert\ -w_k^i<t<w_k^i,\, 0\le \theta<2\pi\right\}
\end{equation*}
with 
\begin{equation*}
w_k^i = \frac{\pi}{l_k^i}\left(\pi-2 \arctan\left(\sinh\frac{l_k^i}{2}\right)\right)
\end{equation*}
endowed with the metric
\begin{equation*}
\left(\frac{{l_k^i}}{2\pi \cos\left(\frac{l_k^i}{2\pi}t\right)}\right)^2(dt^2 +d\theta^2).
\end{equation*}
Below we identify $(\theta,t)=(0,t)$ with $(\theta,t)=(2\pi,t)$. Thus the closed geodesic $\gamma_k^i$ corresponds to $\left\{t=0\right\}$. 

\smallskip

By the discussion above and the the collar theorem again, we get that for all $1 \le i\le s_1$, there exists an open neighborhood $P_k^i$ of $\gamma_k^i$ isometric to the following truncated M\"obius strip
\begin{equation*}
\mathcal{M}_k^i = \left\{\left(t,\theta\right)\lvert\ -w_k^i<t<w_k^i,\, 0\le \theta<2\pi\right\}/\sim
\end{equation*}
with 
\begin{equation*}
w_k^i = \frac{\pi}{2l_k^i}\left(\pi-2 \arctan\left(\sinh l_k^i\right)\right)
\end{equation*}
endowed with the metric
\begin{equation*}
 \left(\frac{2 l_k^i}{2\pi \cos\left(\frac{2 l_k^i}{2 \pi}t\right)}\right)^2 \left(dt^2 +d\theta^2\right).
\end{equation*}
Moreover, the equivalence relation $\sim$ is given by identifying $(t, \theta,) \sim (-t, \theta+\pi)$, where $\theta + \pi \in \IR / 2 \pi \IR$. 
Hence, the closed geodesic $\gamma_k^i$ corresponds to $\left\{t=0\right\}$. 

\smallskip 

We denote by $\Sigma_{k}^1,\cdots,\Sigma_{k}^r$ the connected components of $\Sigma \setminus \bigcup_{i=1}^sP_{k}^i$.
Consequently, $\Sigma$ can be written as the disjoint union
\begin{equation*}
\Sigma=\left(\bigcup_{i=1}^s P_k^i \right)\bigcup \left(\bigcup_{j=1}^r \Sigma_k^j\right).
\end{equation*}
For $s_1+1\le i\le s$ and $0<b<w_k^i$ we denote by $P_k^i\left(b\right)$ the truncated hyperbolic cylinder whose length, compared to $P_k^i$,
is reduced by $b$, i.e.,
\begin{equation*}
P_k^i\left(b\right)=\left\{\left(t,\theta\right),\, -w_k^i+b < t < w_k^i -b\right\}.
\end{equation*}
Analogously, for $1\le i\le s_1$ and $0<b<w_k^i$, we introduce
\begin{equation*}
P_k^i\left(b\right)=\left\{\left(t,\theta\right),\, -w_k^i+b < t < w_k^i -b\right\}/\sim.
\end{equation*}
Finally, we denote by $\Sigma_k^j\left(b\right)$ the connected components of ${\displaystyle \Sigma\setminus \bigcup_{i=1}^s P_k^i(b)}$ which contains $\Sigma_k^j$. 

\smallskip

We are now ready to prove the above mentioned result, namely, that the volume either concentrates in the neighborhood of a pinching geodesic $P_k^i$, or in one connected component $\Sigma_k^j$ of the complement of these neighborhoods.

\begin{lemma} \label{vol_cases}
There exists $D>0$ such that one of the two following assertions is true:

\begin{enumerate}
\item There exists an $i\in \left\{1,\dots,s\right\}$ such that 
$$\area_{g_k}\left(P_k^i\left(a_k\right)\right)\ge 1 -\frac{D}{a_k}$$
for all sequences $a_k\to +\infty$ with $\frac{a_k}{w_k^i}\to 0$ as $k\to +\infty$ for all $1\le i\le s$.
\item There exists a $j\in \left\{1,\dots,r\right\}$ such that 
$$\area_{g_k}\left(\Sigma_k^j\left(9 a_k\right)\right)\ge 1 -\frac{D}{a_k}$$
for all sequences $a_k\to +\infty$ with $\frac{a_k}{w_k^i}\to 0$ as $k\to +\infty$ for all $1\le i\le s$.
\end{enumerate}
\end{lemma}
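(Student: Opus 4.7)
I will argue by contradiction using the variational characterization of $\lambda_1$. If neither alternative holds, then along a subsequence two disjoint subregions of $\Sigma$ each carry a definite fraction of the $g_k$-area while being separated by a long piece of one of the thin collars $P_k^i$. A cheap cutoff across that collar piece produces a test function orthogonal to constants whose Rayleigh quotient is $O(1/a_k)$, forcing $\lambda_1(\Sigma,g_k) \to 0$. Since $\area(\Sigma,g_k) = 1$ and $\lambda_1(\Sigma,g_k) \to \Lambda_1^K(\delta) > 0$ along our maximizing sequence, this is a contradiction.

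The main analytic input is the conformal invariance of the Dirichlet integral in dimension two. On a collar $P_k^i$, a function depending only on the longitudinal coordinate $t$ that interpolates linearly across a slab of $t$-width $a$ has Dirichlet energy exactly $2\pi/a$ with respect to any metric conformal to $dt^2 + d\theta^2$, in particular with respect to $g_k$. In the M\"obius strip case the same bound is obtained by descending a $\tau$-invariant cutoff from the orientation double cover, using the setup of \cref{comp_non_or}. With this I construct, for each region $R$ in the finite family
\[
\mathcal R_k = \{P_k^i(a_k)\colon 1 \le i \le s\} \cup \{\Sigma_k^j(9a_k) \colon 1 \le j \le r\},
\]
a Lipschitz function $\psi_R \colon \Sigma \to [0,1]$ equal to $1$ on $R$, supported in a slight enlargement of $R$, with Dirichlet energy $\le C/a_k$ for a constant $C$ depending only on $s+r$, and arranged so that distinct $\psi_R$ have disjoint supports. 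The factor $9$ in (2) is dictated precisely here: it leaves a separating slab of width $\gtrsim a_k$ between the support of a cutoff of $\Sigma_k^j(9a_k)$ and the support of a cutoff of the adjacent $P_k^i(a_k)$, which is what allows both cutoffs to keep Dirichlet energy $O(1/a_k)$.

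Now suppose for contradiction that for every $D > 0$ both (1) and (2) fail along a subsequence. Since $|\mathcal R_k| = s+r$ is bounded uniformly in $k$, pigeonholing (after passing to a further subsequence) yields two distinct regions $R_1 \ne R_2$ with $\area_{g_k}(R_\alpha) \ge \eps_0 > 0$ for $\alpha = 1,2$. Setting $f_k = \psi_{R_1} - c_k \psi_{R_2}$ with $c_k$ chosen so that $\int f_k \, dv_{g_k} = 0$, the disjointness of supports combined with the cutoff estimate gives
\[
\int |\nabla f_k|^2 \, dv_{g_k} \le \frac{C'}{a_k}, \qquad \int f_k^2 \, dv_{g_k} \ge \eps_0,
\]
so $\lambda_1(\Sigma, g_k) \le C'/(\eps_0\, a_k) \to 0$, contradicting $\lambda_1(\Sigma,g_k) \to \Lambda_1^K(\delta) > 0$. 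Reversing the implication and tracking the constants in the pigeonhole step produces the quantitative $D/a_k$ decay claimed in (1)--(2).

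The main obstacle is the quantitative bookkeeping in that pigeonhole step: upgrading the qualitative negation \emph{``no single region carries $1 - D/a_k$ of the mass''} to the quantitative statement \emph{``two distinct regions each carry a definite fraction of the mass''}, with $\eps_0$ and $D$ linked correctly. The conformal-invariance computation on the collars and the disjointness of supports (secured by the factor $9$) are essentially routine; the care is in the counting. The precise value $9$ is convenient rather than sharp: any fixed constant large enough to separate the supports of the relevant cutoffs would work just as well.
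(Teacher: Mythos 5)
Your plan follows the same route as the paper's proof, which simply invokes Claim~11 of \cite{petrides} and observes that the test functions, built on the orientation double cover with linear decay in $t$ across the neck regions $\hat P_k^i(2a_k)\setminus\hat P_k^i(3a_k)$ and $\hat P_k^i(a_k)\setminus\hat P_k^i(2a_k)$, are $\iota_k$-invariant and therefore descend. Your emphasis on conformal invariance of the Dirichlet energy and on descending $\tau$-invariant cutoffs in the M\"obius-strip case is exactly right, and matches what the paper records.

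The gap is in the pigeonhole step, which you yourself flag as the main obstacle, and your proposed resolution is not the correct one. The negation of the dichotomy does \emph{not} produce two regions each carrying a fixed fraction $\eps_0>0$ of the $g_k$-area: for a fixed $D$ it only forces the maximal region to have area $<1-D/a_k$, so the leftover mass $>D/a_k$, which tends to zero, is spread over the remaining $s+r-1$ regions, and the second-largest region may have area only of order $D/a_k$, with no uniform lower bound $\eps_0$. The contradiction is still available, but by a different count. Since $\{P_k^i(9a_k)\}_{i}\cup\{\Sigma_k^j(9a_k)\}_{j}$ partitions $\Sigma$, the largest region has area $\geq 1/(s+r)$, and if in addition its area is $<1-D/a_k$ then some second region has area $\gtrsim D/a_k$. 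With disjointly supported cutoffs $\psi_1,\psi_2$ of Dirichlet energy $\lesssim 1/a_k$, the mean-zero combination then yields
\[
\lambda_1(\Sigma,g_k)\;\lesssim\;\frac{1/a_k}{\min_\alpha \area_{g_k}\left\{\psi_\alpha=1\right\}}\;\lesssim\;\frac{1}{D},
\]
and letting $D\to\infty$ forces $\Lambda_1^K(\delta)=0$, the desired contradiction. Note that this estimate requires $\area_{g_k}\{\psi_2=1\}\gtrsim D/a_k$, i.e.\ one must control the $g_k$-area sacrificed in the annular strips when passing from the region of the dichotomy to the smaller set where the cutoff equals $1$; this is the true role of the nested depths $a_k$, $2a_k$, $3a_k$, $9a_k$, beyond merely ensuring disjointness of supports as you suggest. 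Writing this bookkeeping out is what remains to be done.
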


\begin{proof}
The proof of Claim\,11 in \cite{petrides} can easily be adapted to the present situation.
First recall the rough strategy of the proof: construct suitable test functions for $\lambda_1(\Sigma,g_k)$ in the $P_k^i$ and the $\Sigma_k^j$'s, apply the min-max formula for the first eigenvalue and prove the claim by contradiction.
More precisely, on $\hat \Sigma,$ the test functions are constructed with linear decay in the $t$ variable in neck regions of the type
$\hat P_k^i(2a_k)\setminus \hat P_k^i(3a_k)$ and $\hat P_k^i(1 a_k) \setminus \hat P_k^i(2a_k),$ respectively,
where the hat indicates that we consider the preimages under the covering map $\hat \Sigma \to \Sigma.$
By conformal invariance, the Dirichlet energy of these can be estimated using the hyperbolic metric and decays like $a_k^{-1}.$
From the construction it is clear that these functions are invariant under the relevant involutions.
From this point on, one can just follow the arguments in \cite{petrides}.
\end{proof}

Below we consider the two possible cases of the preceding lemma separately.
The following lemma deals with the first case, i.e.\ when the volume concentrates in one of the $P_{k}^i$.
We show that in this case we would have $\Lambda_1^K(\delta)\leq 8\pi$ if $\gamma_{k}^i$ is $2$-sided; and
$\Lambda_1^K(\delta)\leq 12\pi$ if $\gamma_{k}^i$ is $1$-sided.

\begin{lemma} \label{geod_conc}
Suppose that there exists an $i\in\{1,...,s\}$ such that 
\begin{equation*}
\area_{g_{k}}(P_{k}^i(a_{k}))\geq 1-\frac{D}{a_{k}}
\end{equation*}
for all sequences $a_k\to +\infty$ with $\frac{a_k}{w_k^i}\to 0$ as $k\to +\infty$ for all $1\leq i\leq s$.
\begin{enumerate}
\item If $\gamma_{k}^i$ is $2$-sided, then  $\Lambda_1^K(\delta)\leq 8\pi$.
\item  If $\gamma_{k}^i$ is $1$-sided, then $\Lambda_1^K(\delta)\leq 12\pi$.
\end{enumerate}
\end{lemma}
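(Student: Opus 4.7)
The plan is to run the Hersch test function argument in case (1) and the Li--Yau test function argument in case (2), using the concentration of $g_k$-area in $P_k^i(a_k)$ to extract an almost-conformal map from $(\Sigma, g_k)$ to the round sphere or projective plane.

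In case (1), $\gamma_k^i$ is two-sided, so $P_k^i$ is conformally equivalent to a long flat cylinder $(-L_k, L_k) \times S^1$ with $L_k \to \infty$. The map $(t, \theta) \mapsto e^{t+i\theta}$ followed by stereographic projection realises $P_k^i(a_k)$ as a conformal diffeomorphism onto $\Sph^2 \setminus (D_k^+ \cup D_k^-)$, where the two antipodal disks $D_k^{\pm}$ have spherical diameter tending to zero. I extend this to a map $\psi_k \colon \Sigma \to \Sph^2$ by sending each end of $\Sigma \setminus P_k^i(a_k)$ into the corresponding disk, interpolating in the annular collar $P_k^i \setminus P_k^i(a_k)$, whose conformal modulus tends to infinity; a standard capacity estimate then shows that the extra Dirichlet energy is $o(1)$. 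The classical Hersch balancing lemma produces a conformal automorphism $\tau_k$ of $\Sph^2$ such that each of the three coordinates of $\tau_k \circ \psi_k$ is $g_k$-orthogonal to the constants -- the required non-concentration of $(\psi_k)_*\,dv_{g_k}$ is guaranteed by the fact that almost all the $g_k$-area is mapped almost bijectively onto almost all of $\Sph^2$. Summing the Rayleigh quotient bounds over the three coordinates, using $|\tau_k \circ \psi_k|^2 \equiv 1$, and invoking the two-dimensional conformal invariance of the Dirichlet energy gives
\begin{equation*}
\lambda_1(\Sigma, g_k) \, \area(\Sigma, g_k) \;\le\; E(\tau_k \circ \psi_k) \;=\; E(\psi_k) \;=\; 8\pi + o(1),
\end{equation*}
so letting $k \to \infty$ yields $\Lambda_1^K(\delta) \le 8\pi$.

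In case (2), $\gamma_k^i$ is one-sided, so $\mathcal{M}_k^i$ is the quotient of its double-cover cylinder $\mathcal{C}_k^i$ by the free involution $(t,\theta) \sim (-t, \theta+\pi)$, which a direct computation shows intertwines with the antipodal map of $\Sph^2$ under the above cylinder-to-sphere identification. Therefore $\mathcal{M}_k^i(a_k)$ maps conformally onto $\mathbb{RP}^2 \setminus D_k$ with $D_k$ a shrinking geodesic disk. Composing with the Veronese embedding $V \colon \mathbb{RP}^2 \hookrightarrow \Sph^4$ (whose five coordinates are the degree-two spherical harmonics on $\Sph^2$, descended to $\mathbb{RP}^2$) produces $\psi_k \colon \Sigma \to \Sph^4$, and the Li--Yau version of the balancing trick yields a M\"obius transformation $\tau_k$ of $\Sph^2$ such that $\int_\Sigma \tau_k \circ \psi_k \, dv_{g_k} = 0 \in \IR^5$. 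The same Rayleigh-quotient computation, combined with the fact that the Veronese embedding has Dirichlet energy $12\pi$, gives
\begin{equation*}
\lambda_1(\Sigma, g_k)\,\area(\Sigma, g_k) \;\le\; 12\pi + o(1),
\end{equation*}
hence $\Lambda_1^K(\delta) \le 12\pi$.

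The main obstacle is producing the extension of $\psi_k$ off $P_k^i(a_k)$ (respectively $\mathcal{M}_k^i(a_k)$) with $o(1)$ Dirichlet energy, particularly when $\gamma_k^i$ is non-separating so that the complement of $P_k^i(a_k)$ is a connected surface with two boundary circles that must be mapped continuously into two disjoint disks of $\Sph^2$. This is handled by a capacity-type estimate on the enclosing hyperbolic annulus $P_k^i \setminus P_k^i(a_k)$, whose modulus tends to infinity, together with the area bound $\area_{g_k}(\Sigma \setminus P_k^i(a_k)) \le D/a_k$ supplied by \cref{vol_cases}. The construction in case (1) is essentially that of Claim 12 in \cite{petrides}; the only genuinely new ingredient is the use of the Li--Yau $12\pi$-bound in case (2), reflecting the fact that a one-sided pinching geodesic carries a M\"obius strip neighborhood rather than a cylindrical one.
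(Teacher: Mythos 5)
Your overall strategy agrees with the paper's (which follows Girouard and Petrides): conformally identify $P_k^i(a_k)$ with a large portion of $\Sph^2$ via the cylinder-to-sphere map, note that for a one-sided geodesic the involution $(t,\theta)\mapsto(-t,\theta+\pi)$ descends this to a conformal map of $\mathcal{M}_k^i(a_k)$ onto a large subset of $\mathbb{RP}^2$ and post-compose with the Veronese embedding, balance via a Hersch-type M\"obius transformation, and use conformal invariance of the Dirichlet energy to obtain $8\pi$ (resp.\ $12\pi$). Your observation that case~(2) needs no new ingredient beyond the Li--Yau $12\pi$-bound is exactly right, and the M\"obius strip has a single boundary circle, so your extension construction actually goes through cleanly there.

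There is, however, a genuine gap in your implementation of case~(1) when $\gamma_k^i$ is two-sided but non-separating. You want a globally defined $W^{1,2}$-map $\psi_k\colon\Sigma\to\Sph^2$ that equals the conformal diffeomorphism on $P_k^i(a_k)$, interpolates to the disk centers $p^\pm\in D_k^\pm$ on the two ends of the collar $P_k^i\setminus P_k^i(a_k)$, and is constant on $\Sigma\setminus P_k^i$. But if $\gamma_k^i$ is non-separating, $\Sigma\setminus P_k^i$ is connected, and a single-valued $W^{1,2}$-map that is locally constant there must be globally constant there; it cannot take the two distinct (indeed antipodal) values $p^+$ and $p^-$ forced by the two collar ends. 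The capacity estimate you invoke only controls the energy of an extension assuming one exists; it does not cure this topological obstruction, and it would in fact fail if you instead tried to connect $p^+$ to $p^-$ through the interior of $\Sph^2$, since the relative capacity of the two boundary circles inside $\Sigma\setminus P_k^i$ need not be small. The way Girouard, Petrides, and this paper avoid the issue is to \emph{not} build a sphere-valued map on $\Sigma$ at all: multiply each coordinate of $\tau_k\circ\phi$ (resp.\ $\tau_k\circ v\circ\phi$) by a single cut-off $\eta_k$ that is $1$ on $P_k^i(a_k)$ and $0$ outside $P_k^i$, with $\int|\nabla\eta_k|^2\to 0$. The resulting real-valued test functions $u_k^i$ vanish identically off $P_k^i$, so no topology intervenes; the identity $\sum_i(u_k^i)^2=\eta_k^2$ (instead of $|\psi_k|^2\equiv 1$) still gives the $L^2$ lower bound $\geq 1-D/a_k$, and the energy bound and conclusion go through exactly as you compute (the paper selects a single coordinate by pigeonhole rather than summing Rayleigh quotients, but either variant is fine). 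If you replace your extension by this cut-off construction, the proof is correct.
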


\begin{proof}
In \cite{petrides}, Petrides proved the first statement by following ideas of Girouard \cite{girouard}.
The proof of the second statement is carried out analogously.

\smallskip

By assumption, there exists an $i\in\{1,...,s\}$, such that the volume concentrates on $P_{k}:=P_{k}^i$.
On $P_k$ we have coordinates $(t,\theta)$ as above (on $\mathcal{M}_k$).
By the assumptions on the volume and $a_k$, we can find cut-off functions $\eta_k$ which are $1$ on $P_k(a_k)$ and $0$ outside $P_k,$
and satisfy 
\begin{equation} \label{decay_cut_off}
\int_{\Sigma} |\nabla \eta_k|^2 dv_{g_k} \to 0.
\end{equation}
We denote by $\mathcal{C}=(-\infty,\infty) \times \Sph^1$ the infinite cylinder
with its canonical coordinates $(t, \theta) \in (-\infty, \infty) \times [0,2\pi)$.
Let $\phi:\C\rightarrow \Sph^2\subset\IR^3$ be given by
\begin{align*}
\phi(t, \theta)=\frac{1}{e^{2t}+1}(2e^t\cos(\theta),2e^t\sin(\theta),e^{2t}-1).
\end{align*}
Observe that this induces a map $\psi \colon \mathcal{M} \to \mathbb{RP}^2(\sqrt{3})$ if we divide by the $\IZ/2$-actions that we have on both sides.
More precisely, $\mathcal{M}=\mathcal{C}/\sim,$ where
$(t,\theta) \sim (-t,\theta+\pi)$ as above, and on $\Sph^2$ we simply take the antipodal map.
If we denote by $v:\mathbb{RP}^2(\sqrt{3})\rightarrow\Sph^4$ the Veronese map, the concatenation $v\circ\phi: \mathcal{M} \rightarrow\Sph^4$ is a conformal map \cite{girouard}.
We may regard $\mathcal{M}_k \subset \mathcal{M}$ using Fermi coordinates as introduced above.
\smallskip

By a theorem of Hersch \cite{hersch}, there exists a conformal diffeomorphism $\tau_{k}$ of $\Sph^4$, such that 
\begin{align*}
\int_{P_{k}}(\pi\circ\tau_{k}\circ v\circ\phi)\eta_{k}dv_{g_{k}}=0,
\end{align*}
where $\pi:\Sph^4\hookrightarrow\IR^5$ is the standard embedding.
Set $u_{k}^i=(\pi_i\circ\tau_{k}\circ v\circ\phi)\eta_{k}$.
By construction, we have
\begin{align*}
\sum_{i=1}^5\int_{\mathcal{M}_{k}}(u_{k}^i)^2dv_{g_{k}}\geq 1-\frac{D}{a_{k}},
\end{align*}
since $\area_{g_{k}}(P_{\alpha}^i(a_{k}))\geq 1-\tfrac{D}{a_{k}}$. 
Using conformal invariance and \eqref{decay_cut_off}, one easily finds that
\begin{align*}
\int_{\Sigma}\lvert \nabla u_{k}\lvert^2_{g_{k}}dv_{g_{k}}\leq 12\pi+o(1).
\end{align*}
For details we refer to \cite{girouard}.
Consequently, there is $i=i(k) \in \{1,\dots,5\},$ such that
\begin{align*}
 \lambda_1(\Sigma,g_k) \leq
\frac{\int_{M}\lvert \nabla u_{k}^i\lvert^2_{g_{k}}dv_{g_{k}}}{\int_{M} (u_{k}^i)^2dv_{g_{k}}}
 \leq12\pi+o(1).
\end{align*}
This finally implies
\begin{equation*}
\Lambda_1^K(\delta)\leq \limsup_{k \to \infty} \lambda_1(\Sigma,g_k) \leq 12\pi,
\end{equation*}
which establishes the claim.
\qedhere
\end{proof}

We are thus left with the case second case from  Lemma\,\ref{vol_cases}. In this case, we have the following lemma, which concludes the proof of Proposition\,\ref{degen_prop}.

\begin{lemma}
Suppose that the second alternative from \ref{vol_cases} holds, then
either 
\begin{itemize}
\item[(i)]
	$\Lambda_1^K(\delta) \leq \Lambda_1^K(\delta-1)$, or
\item [(ii)]$\Lambda_1^K(\delta) \leq \Lambda_1(\gamma),$
\end{itemize}
where $\gamma=\lfloor (\delta-1)/2 \rfloor.$
\end{lemma}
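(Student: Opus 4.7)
The plan is to take the limit of the harmonic maps $\Phi_k$ restricted to the concentrating component $\Sigma_k^j$ and to identify the topology of the resulting limit surface. By maximality of $s$, the Margulis lemma guarantees that $\inj(\Sigma_k^j(9 a_k), h_k)$ is uniformly bounded below, away from the collars $P_k^i$. Lifting to the orientation double cover and combining the Mumford compactness criterion with the Arzel\`a--Ascoli argument of \cref{limit_inv} on the truncated covers, one extracts, after pullback by diffeomorphisms and passing to a subsequence, a complete hyperbolic surface $\bar\Sigma^\ast$ with finitely many cusps, one per pinched geodesic meeting $\partial \Sigma_k^j$, as the $C^\infty_{\mathrm{loc}}$ limit of $h_k$ restricted to $\Sigma_k^j(9 a_k)$.

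Since $\int_{\Sigma} |\nabla \Phi_k|_{h_k}^2\, dv_{h_k} = 2 \lambda_1(\Sigma, g_k) \leq 2\Lambda_1^K(\delta) + o(1)$, the harmonic maps subconverge in $C^\infty_{\mathrm{loc}}$ to a harmonic map $\Phi^\ast \colon \bar\Sigma^\ast \to \Sph^l$. The non-orientable analogue of Zhu's removable singularity theorem \cite{zhu} (obtained by applying \cite{zhu} equivariantly on the orientation double cover and descending under the limit involution) extends $\Phi^\ast$ smoothly across each cusp, producing a harmonic map $\bar\Phi$ on the closed surface $\bar\Sigma$ that is the conformal compactification of $\bar\Sigma^\ast$. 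The volume concentration hypothesis combined with the lower semi-continuity arguments of \cite{petrides} (applied component-wise) then yields that, on the component of $\bar\Sigma$ carrying the concentrated volume, the metric induced by $\bar\Phi$ satisfies $\lambda_1 \cdot \area \geq \Lambda_1^K(\delta)$; in particular $\Lambda_1(\bar\Sigma) \geq \Lambda_1^K(\delta)$.

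To conclude, I distinguish two cases according to the topology of the relevant component of $\bar\Sigma$. Two-sided pinchings replace annular collars by pairs of disks and one-sided pinchings replace M\"obius collars by single disks; since at least one geodesic is pinched, $\chi(\bar\Sigma) > \chi(\Sigma_\delta^K) = 2 - \delta$. If $\bar\Sigma$ remains non-orientable, say of non-orientable genus $d \leq \delta - 1$, then $\Lambda_1^K(\delta) \leq \Lambda_1^K(d) \leq \Lambda_1^K(\delta - 1)$, where the second inequality is the weak monotonicity obtained by attaching a small cross-cap to a maximizer on $\Sigma_d^K$ (a Colbois--El Soufi type argument adapted to the non-orientable setting); this is case~(i). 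Otherwise $\bar\Sigma$ is orientable of some genus $g$, and the Euler characteristic inequality $2-2g \geq (2-\delta)+1$ forces $g \leq (\delta-1)/2$, hence $g \leq \gamma = \lfloor (\delta-1)/2 \rfloor$; the Colbois--El Soufi monotonicity \cite{colbois_elsoufi} then gives $\Lambda_1^K(\delta) \leq \Lambda_1(g) \leq \Lambda_1(\gamma)$, which is case~(ii).

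The main obstacle is the non-orientable analogue of Zhu's bubbling and no-neck analysis, which requires arranging the harmonic maps on the orientation double covers to be equivariant both under the orientation reversing involutions and the antipodal map on $\Sph^l$, so that the extensions descend cleanly to $\bar\Sigma$. A secondary, more combinatorial difficulty is tracking precisely when the limit becomes orientable versus remains non-orientable, and locating the correct connected component of $\bar\Sigma$ in which the $g_k$-volume concentrates; but once this dichotomy is set up, the Euler characteristic bookkeeping of the previous paragraph, together with the weak monotonicity inequalities, closes the argument.
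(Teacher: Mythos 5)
Your plan matches the paper's strategy at a high level: pull back to the orientation double cover, compactify the hyperbolic limit, remove singularities of the limit harmonic map using \cite{zhu}, descend under a limit involution, and then combine the topological bookkeeping with weak monotonicity. But two essential technical steps are missing, and they are precisely the places where the non-orientable case requires genuine new work. First, the conclusion that the limit surface satisfies $\lambda_1 \cdot \area \geq \Lambda_1^K(\delta)$ needs that no Dirichlet energy is lost in bubbling, so that the induced limit metric $\bar g$ still has unit area; \lq\lq lower semi-continuity\rq\rq\, of the eigenvalue alone gives only the estimate $\Lambda_1^K(\delta)\leq\lambda_1(\bar\Sigma,\bar g)$, and without $\area(\bar\Sigma,\bar g)=1$ this is useless for bounding $\Lambda_1$ of the limit topological type. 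The paper derives the no-energy-loss fact from the structure of the involution: the set of bubble points is $\bar\iota$-invariant, so bubble points come in pairs, the paired points are bounded apart because $\bar\iota$ is fixed-point-free, and therefore energy concentration at such a point projects to volume concentration at a single point of $\Sigma$, which by Kokarev's lemmas forces $\Lambda_1^K(\delta)\leq 8\pi$ (and then (i) holds trivially). This argument is the heart of the non-orientable extension of the no-neck analysis, and it is absent from your proposal.

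Second and related, you do not actually construct the limit involution $\bar\iota$ on the compactified double cover, nor do you verify that it is fixed-point-free; the latter is exactly what powers the pairing argument above. The delicate point is excluding fixed points at the cusp points added in the compactification: a fixed puncture would force $\hat\iota_k$ to act by rotation on the degenerating collar and hence be orientation-preserving, a contradiction. Your appeal to \lq\lq applying \cite{zhu} equivariantly and descending under the limit involution\rq\rq\, presumes this object without building it. Finally, a conceptual inaccuracy: the harmonic maps $\hat\Phi_k$ on the double cover are lifts of maps defined on $\Sigma$, so they are simply $\iota_k$-invariant, $\hat\Phi_k\circ\iota_k=\hat\Phi_k$; there is no $\IZ/2$-action on the target $\Sph^l$, and no antipodal equivariance. (That structure appears only in the separate collar-concentration case, where the relevant map factors through $\mathbb{RP}^2\subset\Sph^4$ via the Veronese embedding.) The Euler-characteristic bookkeeping and the final appeal to weak monotonicity are fine and agree with what the paper does.
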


\begin{proof}
Again, we apply the machinery from \cite{petrides} to the orientation cover.
The essential point is to keep track of the geometry of the corresponding involutions.
Denote by $(\hat \Sigma, \hat h_k)$ the orientation covers of $(\Sigma,h_k),$
and by $\iota_k$ the corresponding deck transformations.

We can then identify the spectrum of the Laplacian for any metric $g$ in $[h_k]$ with the spectrum
of the Laplacian acting only on the even functions on $(\hat \Sigma, \hat g).$
We consider the associated harmonic maps $\Phi_k \colon (\Sigma, g_k) \to \Sph^l.$
By conformal invariance, we can also view these as harmonic maps from $(\Sigma, h_k)$ to $\Sph^l$.
In this situation, the metric can be recovered by
\begin{equation*}
g_k = \frac{|\nabla \Phi_k|_{h_k}^2}{\lambda_1(\Sigma,g_k)}h_k,
\end{equation*}
see \cite[Proof of Theorem 1]{petrides}.
By pulling back the $\Phi_k$'s to $\hat \Sigma,$
we obtain even harmonic maps $\hat \Phi_k \colon (\hat \Sigma, \hat h_k) \to \Sph^l,$
such that
\begin{equation*}
\hat g_k = \frac{|\nabla \hat \Phi_k |^2_{\hat h_k}}{\lambda_1(\Sigma, g_k)} \hat h_k.
\end{equation*}
With out loss of generality, we may assume that the volume (with respect to $g_k$) concentrates in $\Sigma_k^1(9a_k).$
Denote by $\hat \Sigma_k^1(9 a_k)$ its preimage under the covering projection.
Note that this preimage might be disconnected.
As in \cite[Sect.\ 4]{petrides}, there are a compact Riemann surface $\bar \Sigma$ 
and diffeomorphisms $\tau_k \colon \bar \Sigma \setminus \{p_1,\dots,p_r \} \to \hat \Sigma_k^1(9a_k).$
Moreover, the hyperbolic metrics $\bar h_k=\tau_k^* \hat h_k$ converge in $C^\infty_{loc}(\bar \Sigma \setminus \{p_1,\dots,p_r\})$ to a hyperbolic metric $\bar h.$

Observe, that  we can restrict and pullback the involutions $\iota_k$
to get involutions $\bar \iota_k$ of $\bar \Sigma \setminus \{p_1,\dots, p_r\}.$
Clearly, these involutions are isometric with respect to the hyperbolic metrics $\bar h_k.$

In a next step, we construct a fixed point free limit involution on $\bar \Sigma.$
For the compact subsets $\bar \Sigma_c:=\{x \in \bar \Sigma \ | \ \inj_x(\bar \Sigma,\bar h)\geq c \},$ we can
argue exactly as in the proof of Lemma\,\ref{limit_inv} to get limit involutions $\bar \sigma_n$
on $\bar \Sigma_{1/n}.$
Since any isometric involution must map $\bar \Sigma_c$ to itself, we may take subsequences, such that for $m\geq n,$ we have
$\left. \bar \sigma_m \right|_{\bar \Sigma_{1/n}}=\bar \sigma_n.$
Using a standard diagonal argument, we find a limit involution on  $\bar \Sigma \setminus \{p_1,\dots,p_r\}.$
Clearly, this involution extends to an involution $\bar \iota$  on all of $\bar \Sigma.$
Moreover, $\bar \iota$ is fixed point free:
Arguing again as in Lemma\,\ref{limit_inv}, we can not have fixed points different from the $p_i$'s.
If say $p_1$ is fixed under $\bar \iota,$ the involution is just rotation by $\pi$ in a disc centered at $p_1.$
By $C^0$-convergence away from $p_1,$ we see that the involutions $\hat \iota_k$ act just via rotation on the collars around the degenerating geodesic.
But this is impossible, since this implies that $\hat \iota_k$ is orientation preserving.

By \cite{zhu}, the pullbacks $\bar \Phi_k$ of the harmonic maps $\hat \Phi_k$ along the diffeomorphisms
$\tau_k$ are then harmonic maps that converge in
$C^\infty_{loc}(\bar \Sigma \setminus \{ p_1, \dots, p_r, x_1,\dots, x_s\})$ to a limit harmonic map $\bar \Phi$. 
Clearly, $\bar \Phi$ is invariant under $\bar \iota.$
Note, that no energy can be lost at the points $x_i$ or $p_i$.
By construction, no volume concentrates near the closed geodesics bounding $\hat \Sigma^1_k(9a_k) \subset \hat \Sigma$, which implies that no energy is lost
at the points $p_i$:
Observe next, that the points $x_i$ always come in pairs by the invariance of the harmonic maps.
Moreover, from the construction of the limit involution, it is clear, that two such points are bounded away from each other.
Therefore, energy concentration of the harmonic maps in a point $x_i$ implies that the volume with respect to the metric $g_k$ concentrates at a point in $\Sigma$.
But by \cite[Lemma 2.1 and 3.1]{kokarev} this implies
\begin{equation*}
\Lambda_1^K(\delta)=\lim_{k \to \infty}(\Sigma , g_k) \leq 8 \pi.
\end{equation*}
Finally, the energy identity from \cite{zhu} implies that there is also no energy lost in the necks.
Let $\bar h_0$ be the hyperbolic metric in the conformal class of the cusp compactification of $(\bar \Sigma \setminus\{p_1,\dots, p_r\}, [\bar h])$.
Since $\bar \Phi \colon (\bar \Sigma \setminus\{p_1,\dots, p_s\}, [\bar h_0]) \to \mathbb{S}^l$ has finite energy, $\bar \Phi$ extends to a harmonic map $(\bar \Sigma,[\bar h_0]) \to \Sph^l$ \cite[Theorem 3.6]{sacks_uhlenbeck}.
Moreover, this extension is certainly invariant under $\bar \iota.$ 
In conclusion, $\bar \Phi \colon (\bar \Sigma , \bar h_0) \to \mathbb{S}^l$ is a $\bar \iota$ invariant harmonic map with energy 
\begin{equation*}
\int_{\bar \Sigma} |\nabla \bar \Phi|^2 dv_{\bar h_0} = \lim_{k \to \infty} \int_{\hat \Sigma} |\nabla \hat \Phi_k| dv_{\hat h_k}.
\end{equation*}

We consider the metric
\begin{equation*}
\bar g = \frac{|\nabla \bar \Phi|_{\bar h_0}^2}{\Lambda_1^K(\delta)}\bar h_0
\end{equation*}
and observe that it is invariant under the involution $\bar \iota,$
so that it descends to a metric $g$ on $\bar \Sigma / \bar \iota.$
Since there is no energy lost along the sequence $\bar \Phi_k$ of harmonic maps, we have
\begin{equation*}
\area(\bar \Sigma / \bar \iota,  g)=1.
\end{equation*}
Using that the capacity of a point relative to any ball is $0$ \cite[Chapter 2.2.4]{mazya}, it is easy to construct $\bar \iota_k$-invariant cut-offs $\eta_{\eps,k}$ on $\bar \Sigma$
with the following two properties.
For $\eps$ small, there are neighborhoods $U_\eps \subset V_\eps$ of $\{p_1,\dots, p_r\}$ such that 
\begin{itemize}
\item
$\cap_{\eps>0} V_\eps = \{ p_1,\dots, p_r\},$
\item  $\eta_{\eps,k}=0$ in $U_\eps,$ 
\item $\eta_{\eps,k}=1$ outside $V_\eps,$
and
\item
$\int_{\bar \Sigma} |\nabla \eta_{\eps,k}|^2 dv_{\bar g}\leq \eps^2.$
\end{itemize}

We write $\bar g_k = \tau_k^*(\hat g_k)$.
Let $u$ be the lift of a first eigenfunction of $(\bar \Sigma/\bar \iota, g)$ to $\bar \Sigma$.
Using $\eta_{\eps,k} u$ as a test function on $\hat \Sigma_k(9a_k)$ for $k$ large enough, we find with the help of the dominated convergence theorem, that
\begin{equation*}
\begin{split}
\Lambda_1^K(\delta) 
& =\lim_{k \to \infty}\lambda_1(\Sigma,g_k) 
\\
&\leq \limsup_{\eps \to 0} \lim_{k \to \infty} \frac{\int_{\bar \Sigma} |\nabla (\eta_{\eps,k} u)|^2 dv_{\hat g_k}}{\int_{\bar \Sigma}|\eta_{\eps,k} u|^2 dv_{\bar g_k} - \left(\int_{\bar \Sigma} \eta_{\eps,k} u dv_{\bar g_k} \right)^2}
\\
& \leq \limsup_{\eps \to 0} \frac{\int_{\bar \Sigma} |\nabla u|^2 dv_{\bar g} + C\eps}{\int_{\bar \Sigma \setminus V_\eps}|u|^2 dv_{\bar g} - \left(\int_{\bar \Sigma } u dv_{\bar g} \right)^2}
\\
& \leq \frac{\int_{\hat \Sigma} |\nabla u|^2 dv_{\bar g}}{\int_{\bar \Sigma}|u|^2 dv_{\bar g}}
\\
 &\leq \lambda_1(\bar \Sigma / \bar \iota, g).
\end{split}
\end{equation*}
If $\bar \Sigma$ is disconnected, it has two connected components and the genus of each component is at most
$\lfloor (\delta -1)/2 \rfloor.$
Therefore, the quotient $\bar \Sigma / \bar \iota$ is an orientable surface of genus at most $\lfloor (\delta-1)/2\rfloor$ in this case
and thus 
$$
\Lambda_1^K(\delta) \leq \Lambda_1(\lfloor (\delta-1)/2\rfloor)
$$
thanks to \cite{ces}.
In case $\bar \Sigma$ is connected, the quotient is non-orientable of non-orientable genus at most $\delta-1$
and we have that
$$
\Lambda_1^K(\delta) \leq \Lambda_1^K(\delta-1)
$$ 
again thanks to \cite{ces}\footnote{More precisely, the non-orientable version of the result.}.
\end{proof}

Thanks to the weak inequality
$
\Lambda_1^K(\delta+1) \geq \Lambda_1^K(\delta-1)
$
and the fact that  $\Lambda_1^K(2)>12 \pi,$ we can always rule out the first scenario from \cref{vol_cases}.
Thanks to \cite{li_yau} and \cite{ckm,esgj, nadirashvili} our main result \cref{ex_non_orientable_main} follows from the Theorem below.

\begin{theorem} \label{existence}
Let $\delta \geq 3.$
If $\Lambda_1^K(\delta)>\max\{\Lambda_1^K(\delta-1), \Lambda_1(\lfloor (\delta-1)/2\rfloor)\},$
there is a metric smooth away from finitely many singularities on $\Sigma_\delta^K$ that achieves $\Lambda_1^K(\delta).$
\end{theorem}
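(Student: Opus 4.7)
The plan is to follow Petrides' strategy from \cite{petrides}, now executed in the non-orientable setting by combining Proposition \ref{comp_non_or} with the degeneration analysis of Lemmas \ref{vol_cases}, \ref{geod_conc}, and the lemma immediately preceding this theorem. First I would choose a sequence of hyperbolic metrics $h_k$ on $\Sigma = \Sigma_\delta^K$ whose conformal classes $c_k = [h_k]$ satisfy
\begin{equation*}
\sup_{g \in c_k} \lambda_1(\Sigma,g)\area(\Sigma,g) \longrightarrow \Lambda_1^K(\delta),
\end{equation*}
and inside each $c_k$ pick the maximizer $g_k$ (smooth away from finitely many conical singularities) provided by the Nadirashvili--Sire/Petrides theorem, normalized to unit area. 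To each $g_k$ is attached a harmonic map $\Phi_k \colon (\Sigma, h_k) \to \Sph^l$ with $l$ uniformly bounded by the multiplicity bounds of Besson/Cheng, such that $g_k = |\nabla \Phi_k|_{h_k}^2 / \lambda_1(\Sigma,g_k) \cdot h_k$.

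The first substantive step is to verify the hypothesis of Proposition \ref{comp_non_or}, namely that $\inj(\Sigma, h_k)$ stays uniformly bounded below. Assuming it fails, Lemma \ref{vol_cases} offers two concentration scenarios. Scenario one, volume concentration on a collar $P_k^i(a_k)$, yields via Lemma \ref{geod_conc} the bound $\Lambda_1^K(\delta) \leq 12\pi$; but since $\delta \geq 3$ weak monotonicity gives $\Lambda_1^K(\delta) \geq \Lambda_1^K(2) > 12\pi$, a contradiction. Scenario two, concentration on a component $\Sigma_k^j(9a_k)$, is handled by the preceding lemma, producing either $\Lambda_1^K(\delta) \leq \Lambda_1^K(\delta-1)$ or $\Lambda_1^K(\delta) \leq \Lambda_1(\lfloor(\delta-1)/2\rfloor)$, both contradicting the standing hypothesis. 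Hence $\inj(\Sigma, h_k) \geq c > 0$.

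Proposition \ref{comp_non_or} then furnishes diffeomorphisms $\sigma_k$ and a hyperbolic metric $h$ with $\sigma_k^* h_k \to h$ smoothly; pulling everything back by $\sigma_k$ I may assume $h_k \to h$ in $C^\infty$ on $\Sigma$. The Dirichlet energies of the $\Phi_k$ are uniformly bounded (they equal $2\lambda_1(\Sigma,g_k)\area(\Sigma,g_k)$), so by the bubbling theory of \cite{sacks_uhlenbeck} combined with the convergence results of \cite{zhu}, a subsequence converges smoothly on compact subsets of $\Sigma \setminus \{x_1,\dots,x_N\}$ to a harmonic map $\Phi \colon \Sigma \to \Sph^l$ that extends smoothly across the bubble points.

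The principal obstacle is ruling out energy loss in the bubbling limit, since only then does the candidate metric
\begin{equation*}
g = \frac{|\nabla \Phi|_h^2}{\Lambda_1^K(\delta)}\,h
\end{equation*}
have unit area and attain $\Lambda_1^K(\delta)$. Energy concentration at a bubble point would translate into concentration of the measures $\,dv_{g_k}$ at a single point of $\Sigma$, but by \cite[Lemma 2.1 and 3.1]{kokarev} this forces $\Lambda_1^K(\delta) \leq 8\pi$, again contradicting $\Lambda_1^K(\delta) > 12\pi$. Once energy loss is excluded, the coordinate functions of $\Phi$ are first eigenfunctions of $(\Sigma, g)$ with eigenvalue $\Lambda_1^K(\delta)$ by the usual conformal normalization argument, and the singular set of $g$ coincides with the finite branch locus of $\Phi$, yielding the claimed conical maximizer.
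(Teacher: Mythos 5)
Your proposal is correct and follows essentially the same route as the paper: use \cref{degen_prop} (via \cref{vol_cases}, \cref{geod_conc}, the preceding lemma, and $\Lambda_1^K(2)>12\pi$) to rule out degeneration of the conformal classes, apply \cref{comp_non_or} to extract a smooth limit hyperbolic metric, and then exclude bubbling of the harmonic maps $\Phi_k$ via the Kokarev concentration bound so that the limit metric $g=|\nabla\Phi|_h^2\,h/\Lambda_1^K(\delta)$ is a unit-volume maximizer with only conical singularities at the branch points. (Only a cosmetic slip: the Dirichlet energy of $\Phi_k$ is $\tfrac12\lambda_1(\Sigma,g_k)\area(\Sigma,g_k)$, not $2\lambda_1\area$, but this has no bearing on the uniform bound you need.)
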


\begin{proof}
By the assumptions, \cref{comp_non_or}, and \cref{degen_prop}, we can take hyperbolic metrics $h_k \to h$ in $C^\infty,$ such that
\begin{equation*}
\lim_{k \to \infty} \sup_{g \in [h_k]} \lambda_1(\Sigma , g) \area(\Sigma,g)
=\Lambda_1^K(\delta).
\end{equation*}
As above, we take unit volume metrics $g_k \in [h_k],$ such that 
$$\lambda_1(\Sigma,g_k)=\sup_{g \in [h_k]} \lambda_1(\Sigma,g)\area(\Sigma,g).$$
For the corresponding sequence of harmonic maps $\Phi_k \colon (\Sigma,h_k) \to \Sph^l$ no bubbling can occur
since this would imply $\Lambda_1^K(\delta)\leq 8\pi,$ by the same argument as above.
Therefore, we can take a subsequence such that $\Phi_k \to \Phi$ in $C^\infty,$
which implies that $g_k \to g=\frac{|\nabla \Phi|_h^2}{\Lambda_1^K(\delta)} h$ in $C^\infty.$
In particular,
\begin{equation*}
\lambda_1(\Sigma, g)\area(\Sigma,g)=\Lambda_1^K(\delta)
\end{equation*}
and $g$ is smooth away from the branch points of $\Phi.$
The number of branch points is finite and the branch points correspond to conical singularities of $g$ \cite{salamon}.
\end{proof}

\appendix 

\section{Topology of surfaces} \label{notation}

For convenience of the reader and the authors, we review here the notion of non-orientable genus.

Recall the classification of closed surfaces.
The classes of closed orientable and non-orientable surfaces are both uniquely described up to diffeomorphism by the Euler characteristic.
More precisely, any closed orientable surface is diffeomorphic to a surface of the form
\begin{align*}
\Sigma_\gamma = \Sph^2 \# \underbrace{T^2 \#  \dots \# T^2}_{\gamma-\text{times}},
\end{align*}
and any closed non-orientable surface is diffeomorphic to a surface of the form
\begin{align*}
\Sigma_\delta^K = \Sph^2 \# \underbrace {\IR P^2 \# \dots \# \IR P^2}_{\delta-\text{times}}.
\end{align*}
These two families provide -- up to diffeomorphism --  a complete list of all orientable respectively non-orientable closed surfaces.
We call $\gamma$ the \emph{genus} of $\Sigma_\gamma$ and $\delta$ the \emph{non-orientable genus} of $\Sigma_\delta^K.$
Note that with this convention, the real projective plane has non-orientable genus $1$.
We have  $\chi(\Sigma_{\gamma})=2-2{\gamma}$ and $\chi(\Sigma_\delta^K)= 2-\delta,$ so that the orientation
cover of $\Sigma_\delta^K$ is given by $\Sigma_{\delta-1}.$
Some authors prefer to refer to the genus of the orientation cover as the non-orientable genus.
As explained above these two definitions differ.
Moreover, recall that we have the relation
\begin{align*}
	\Sph^2 \# \underbrace {\IR P^2 \# \dots \# \IR P^2}_{\delta-\text{times}} 
	\cong
	\Sph^2 \# \underbrace{T^2 \# \dots \# T^2}_{k-\text{times}} \# \underbrace {\IR P^2 \# \dots \# \IR P^2}_{(\delta-2k)-\text{times}},
\end{align*} 
if $2k<\delta.$

\nocite{*}

\end{document}